\newtheorem{theorem}{\bf Theorem}[section]
\newtheorem{lemma}[theorem]{\bf Lemma}
\newtheorem{definition}[theorem]{\bf Definition}
\newcommand{\qed}{\hfill $\square$ \bigskip}
\newcommand{\aut}{\rm Aut}
\begin{document}

\baselineskip=0.30in
\vspace*{40mm}

\begin{center}
{\LARGE \bf The Graovac-Pisanski Index of Zig-Zag Tubulenes and the Generalized Cut Method}
\bigskip \bigskip

{\large \bf Niko Tratnik
}
\bigskip\bigskip

\baselineskip=0.20in

\textit{Faculty of Natural Sciences and Mathematics, University of Maribor, Slovenia} \\
{\tt niko.tratnik@um.si}
\medskip

\bigskip\medskip

(\today)

\end{center}

\noindent
\begin{center} {\bf Abstract} \end{center}

\vspace{3mm}\noindent
The Graovac-Pisanski index, which is  also called the modified Wiener index, was introduced in 1991 by A. Graovac and T. Pisanski.
This variation of the classical Wiener index takes into account the symmetries of a graph. In 2016 M. Ghorbani and S. Klav\v zar calculated this index by using the cut method, which we generalize in this paper. Moreover, we prove that in some cases the automorphism group of a zig-zag tubulene is isomorphic to the direct product of a dihedral group and a cyclic group. Finally, the closed formulas for the Graovac-Pisanski index of zig-zag tubulenes are calculated. 

\baselineskip=0.30in

\noindent {\bf Key words:} Modified Wiener index; Graovac-Pisanski index; Zig-zag tubulene; Automorphism group; Cut method


\section{Introduction}
The Graovac-Pisanski index was introduced by A. Graovac and T. Pisanski in 1991 \cite{graovac} under the name modified Wiener index.  Unfortunately, the same name was later used for   different variations of the Wiener index \cite{ni-tr,gu-vu,li-li}. 
As sugested by M. Ghorbani and S. Klav\v zar in \cite{ghorbani}, we use the name Graovac-Pisanski index. 
In \cite{graovac} they applied the symmetry group of a graph to obtain an algebraic modification of the classical Wiener index, so
beside distances in a graph this index also considers its symmetries. The automorphisms (symmetries) of a molecular graph represent the isomers of a corresponding molecule. 

It was shown in \cite{ashrafi_koo_diu1} that the quotient of the Wiener index and the Graovac-Pisanski index is strongly correlated with the topological efficiency for some fullerene molecules. The topological efficiency was introduced in \cite{cataldo,ori} as a tool for the classification of the stability of molecules. It was also pointed out that electronic properties of carbon systems are deeply connected to the topology of their graphs (see book \cite{basak}, pp. 3--21).

For a vertex-transitive graph the Graovac-Pisanski index coincides with the Wiener index.
The previous work on the symmetries of different nanostructures can be found in \cite{ashrafi_diu,ashrafi_koo_diu,damn,ghorbani_hak}
and  particularly for the Graovac-Pisanski index in \cite{ashrafi_sha,koo_ashrafi,koo_ashrafi2,sha_ashrafi}. In addition, the cut method for this index  was developed in \cite{ghorbani}, where it was proved that the computation of the Graovac-Pisanski index
can be reduced to the computation of the Wiener indices of the appropriately weighted
quotient graphs.

In the present paper we first generalize the cut method so that  it is valid for any partition of the edge set which is coarser than $\Theta^*$-partition. Next, the automorphisms for an important family of chemical graphs, the zig-zag tubulenes, are described and finally, the closed formulas for their Graovac-Pisanski index are calculated.

\section{Preliminaries}

Unless stated otherwise, the graphs considered in this paper are finite and connected. The {\em distance} $d_G(x,y)$ between vertices $x$ and $y$ of a graph $G$ is the length of a shortest path between vertices $u$ and $v$ in $G$. We also write $d(x,y)$ for $d_G(x,y)$.
\bigskip

\noindent
The {\em Wiener index} of a graph $G$ is defined as $\displaystyle{W(G) = \frac{1}{2} \sum_{u \in V(G)} \sum_{v \in V(G)} d_G(u,v)}$. Moreover, if $S \subseteq V(G)$, then $W(S) = \frac{1}{2} \sum_{u \in S} \sum_{v \in S} d_G(u,v)$.
\bigskip

\noindent
Now we extend the above definition to weighted graphs as follows. Let $G$ be a connected graph and let $w:V(G)\rightarrow {\mathbb R}^+$ be a given function. Then $(G,w)$ is a {\em vertex-weighted graph}. The \textit{vertex-weighted Wiener index} of $(G,w)$ is defined as
$$W(G,w) = \frac{1}{2} \sum_{u \in V(G)} \sum_{v \in V(G)} w(u)w(v)d_G(u,v).$$

\noindent  
An \textit{isomorphism of graphs} $G$ and $H$ with $|E(G)|=|E(H)|$ is a bijection $f$ between the vertex sets of $G$ and $H$, $f: V(G)\to V(H)$,
such that for any two vertices $u$ and $v$ of $G$ it holds that if $u$ and $v$ are adjacent in $G$ then $f(u)$ and $f(v)$ are adjacent in $H$. When $G$ and $H$ are the same graph, the function $f$ is called an \textit{automorphism} of $G$. The composition of two automorphisms is another automorphism, and the set of automorphisms of a given graph $G$, under the composition operation, forms a group ${\aut}(G)$, which is called the \textit{automorphism group} of the graph $G$.
\bigskip

\noindent 
The Graovac-Pisanski index of a graph $G$, $\widehat{W}(G)$, is defined as
$$\widehat{W}(G) = \frac{|V(G)|}{2 |{\aut}(G)|} \sum_{u \in V(G)} \sum_{\alpha \in {\aut}(G)} d_G(u, \alpha(u)).$$
Roughly speaking, the Graovac-Pisanski
index measures how far the vertices of a graph are moved on the average by its automorphisms.
\bigskip

\noindent Next, we repeat some important concepts from group theory. If $G$ is a group and $X$ is a set, then a \textit{group action} $\phi$ of $G$ on $X$ is a function $\phi :G \times X \to X$
that satisfies the following: $\phi(e,x) = x$ for any $x \in X$ (here, $e$ is the neutral element of $G$) and $\phi(gh,x)=\phi(g,\phi(h,x))$ for all $g,h \in G$ and $x \in X$. The \textit{orbit} of an element $x$ in $X$ is the set of elements in $X$ to which $x$ can be moved by the elements of $G$, i.e. the set $\lbrace \phi(g,x) \, | \, g \in G \rbrace$. If $G$ is a graph and ${\aut}(G)$ the automorphism group, then $\phi: {\aut}(G) \times V(G) \to V(G)$, defined by $\phi(\alpha,u) = \alpha(u)$ for any $\alpha \in {\aut}(G)$, $u \in V(G)$, is called the \textit{natural action} of the group ${\aut}(G)$ on $V(G)$.

\noindent
It was shown in \cite{graovac} that if $V_1, \ldots, V_t$ are the orbits under the natural action of the group ${\aut}(G)$ on $V(G)$, then
\begin{equation}
\label{formula}
\widehat{W}(G) = |V(G)| \sum_{i=1}^t \frac{1}{|V_i|}W(V_i).
\end{equation}

\noindent We also introduce $W'(G) = \sum_{i=1}^t W(V_i)$, which is the sum of the Wiener indices of orbits of $G$.
\bigskip

\noindent 
The \textit{dihedral group} $D_n$ is the group of symmetries of a regular polygon with $n$ sides. Therefore, the group $D_n$ has $2n$ elements. The \textit{cyclic group} $\mathbb{Z}_n$ is a group that is generated by a single element of order $n$. Given groups $G$ and $H$, the \textit{direct product} $G \times H$ is defined as follows. The underlying set is the Cartesian product $G \times H$ and the binary operation on $G \times H$ is defined component-wise: $(g_1,h_1)(g_2,h_2)=(g_1g_2,h_1h_2)$, $(g_1,h_1),(g_2,h_2) \in G \times H$.
\bigskip

\noindent
If $G$ and $H$ are groups, then a \textit{group isomorphism} is a bijective function $f: G \rightarrow H$ such that for all $u$ and $v$ in $G$ it holds $f(uv)=f(u)f(v)$.
\bigskip

\noindent
Two edges $e_1 = u_1 v_1$ and $e_2 = u_2 v_2$ of graph $G$ are in relation $\Theta$, $e_1 \Theta e_2$, if
$$d_G(u_1,u_2) + d_G(v_1,v_2) \neq d_G(u_1,v_2) + d_G(u_1,v_2).$$
Note that this relation is also known as Djokovi\' c-Winkler relation.
The relation $\Theta$ is reflexive and symmetric, but not necessarily transitive.
We denote its transitive closure (i.e.\ the smallest transitive relation containing $\Theta$) by $\Theta^*$. Let $ \mathcal{E} = \lbrace E_1, \ldots, E_r \rbrace$ be the $\Theta^*$-partition of the set $E(G)$. Then we say that a partition $\lbrace F_1, \ldots, F_k \rbrace$ of $E(G)$ is \textit{coarser} than $\mathcal{E}$
if each set $F_i$ is the union of one or more $\Theta^*$-classes of $G$. The following lemma was proved in \cite{graham}.

\begin{lemma} \cite{graham} \label{pomozna}
Let $E$ be a $\Theta^*$-class of a connected graph $G$, and let $x,y \in V(G)$. If $P$ is a
shortest $x,y$-path and $Q$ an arbitrary $x,y$-path, then $|E(Q) \cap E| \geq |E(P) \cap E|$.
\end{lemma}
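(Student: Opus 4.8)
The plan is to isolate one special case --- namely that a $\Theta^*$-class $E$ ``separates'' the two endpoints of each of its edges --- and then bootstrap the general statement from it by induction on $d_G(x,y)$.

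So I would first prove: if $e=ab$ lies in a $\Theta^*$-class $E$, then every $a$--$b$ path in $G$ contains an edge of $E$. Let $R\colon a=u_0,u_1,\dots,u_t=b$ be any $a$--$b$ path and put $\delta_i=d_G(a,u_i)-d_G(b,u_i)$. Since $\delta$ is integer-valued with $\delta_0=-1<0\le1=\delta_t$, there is a least index $j$ with $\delta_j\ge0$, and then $\delta_{j-1}\le-1$. For the edges $e=ab$ and $f=u_{j-1}u_j$ one gets
$$\big(d_G(a,u_{j-1})+d_G(b,u_j)\big)-\big(d_G(a,u_j)+d_G(b,u_{j-1})\big)=\delta_{j-1}-\delta_j\le-1\neq0 ,$$
so $e\,\Theta\,f$; as $E$ is a $\Theta^*$-class containing $e$, this forces $f\in E$, hence $R$ meets $E$. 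In particular the lemma holds when $d_G(x,y)\le1$.

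Next I would induct on $n:=d_G(x,y)$. Assume $n\ge2$ and that the lemma holds for all closer pairs, and suppose a shortest path $P\colon x=v_0,\dots,v_n=y$ (write $e_i=v_{i-1}v_i$) and an $x$--$y$ path $Q$ violate it, with $|E(Q)\cap E|<|E(P)\cap E|=:k$; among all such $Q$ take one with $|E(Q)|$ minimum. A few routine reductions fix the configuration. If $P$ and $Q$ met in a vertex $z\notin\{x,y\}$, one of the two halves at $z$ would still be a violating pair, but at distance $<n$, contradicting the inductive hypothesis; likewise $P$ and $Q$ cannot share their first or their last edge (delete it and recurse). Thus $P$ and $Q$ are internally disjoint and $C:=P\cup Q$ is a cycle. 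Now, if the first edge of $E$ met along $P$ is $e_i$ with $i\ge2$, then the $E$-free backtrack $v_{i-1},v_{i-2},\dots,v_0$ followed by $Q$ is a $v_{i-1}$--$y$ path meeting $E$ in only $|E(Q)\cap E|$ edges, whereas the shortest path $v_{i-1},v_i,\dots,v_n$ meets $E$ in $k$ edges; this violates the inductive hypothesis for $(v_{i-1},y)$, a pair at distance $n-i+1<n$. The symmetric argument at the $y$-end reduces us to the case $e_1\in E$ and $e_n\in E$.

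The remaining case --- an edge of $E$ at each $P$-corner of the cycle $C=P\cup Q$ --- is, I expect, the main obstacle, since the reductions above are now exhausted and one must analyse $C$ itself. If $C$ is an isometric cycle of $G$, then the restriction of $\Theta$ to $E(C)$ coincides with the Djokovi\'c--Winkler relation of the cycle $C$ alone, so in the even case every edge of $P$ is $\Theta$-related to the antipodal edge of $C$, which lies on $Q$; since $E$ is closed under $\Theta$, this antipodal bijection yields $|E(P)\cap E|\le|E(Q)\cap E|$ --- a contradiction --- and in the odd case all of $E(C)$ forms one $\Theta$-class of $C$, forcing $E(C)\subseteq E$ or $E(C)\cap E=\varnothing$, so again no violation. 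If $C$ is not isometric, I would choose two of its vertices that are closer in $G$ than along $C$, split $C$ along a shortest $G$-path between them into two strictly shorter cycles, and recurse, using the minimality of $|E(Q)|$ and the special case above to control how many edges of $E$ the detour picks up. Making this last reduction precise --- in particular tracking how such chords interact with the class $E$ --- is the step I anticipate will require the real work; everything preceding it is bookkeeping.
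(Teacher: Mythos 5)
The paper does not actually prove this lemma; it is quoted directly from Graham and Winkler \cite{graham}, so there is no in-paper argument to compare yours with. Judged on its own, your proposal is incomplete. The special case (every path joining the endpoints of an edge of $E$ must meet $E$) is proved correctly and in full, and your reductions are sound: splitting at a common internal vertex of $P$ and $Q$, and pushing the first and last $E$-edges of $P$ to its ends, are legitimate uses of the inductive hypothesis on $d_G(x,y)$. The isometric-cycle subcase is also essentially right (the antipodal edge of each edge of $P$ does lie on $Q$, but only because $|E(Q)|\ge|E(P)|$ --- a point you should state).

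The gap is the non-isometric case, which you yourself flag as ``the real work,'' and the sketch you give for it does not fit the induction you set up. If $s,t\in V(C)$ satisfy $d_G(s,t)<d_C(s,t)$, there is no reason to have $d_G(s,t)<d_G(x,y)$: since $Q$ may be much longer than $P$, the shortcut can join two internal vertices of $Q$ that are farther apart in $G$ than $x$ and $y$ are, so the inductive hypothesis is unavailable for the pair $(s,t)$, and comparing the shortcut with the arc of $C$ it replaces is precisely an instance of the statement being proved. You would also have to handle shortcuts with one end on $P$ and one on $Q$, where splitting $C$ yields two cycles each containing pieces of both paths, so neither piece has the form (shortest path, competitor) needed to recurse. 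Closing the argument requires a different induction parameter (for instance on $|E(Q)|$ or on the length of the cycle, with the statement extended to walks) together with a precise count of the $E$-edges picked up by the shortcut; as written, the case that carries the content of the lemma is unproved.
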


\noindent
Suppose $G$ is a graph and $F \subseteq E(G)$. The \textit{quotient graph} $G / F$ is a graph whose vertices are connected components of the graph $G - F$, such that two components $C_1$ and $C_2$ are adjacent in $G / F$ if some vertex in $C_1$ is
adjacent to a vertex of $C_2$ in $G$.
\bigskip

Finally, we will formally define open-ended carbon nanotubes, also called tubulenes (see \cite{sa}). Choose any lattice point in the hexagonal lattice as the origin $O$. Let $\overrightarrow{a_1}$ and $\overrightarrow{a_2}$ be the two basic lattice vectors.
 Choose a vector $ \overrightarrow{OA} =n\overrightarrow{a_1}+m \overrightarrow{a_2}$
such that $n$ and $m$ are two integers and $|n|+|m|>1$, $nm\neq -1$. Draw two straight lines $L_1$ and $L_2$ passing through
$O$ and $A$ perpendicular to $O A$, respectively. By rolling up the hexagonal strip between $L_1$ and $L_2$ and gluing $L_1$ and $L_2$ such
that $A$ and $O$ superimpose, we can obtain a hexagonal tessellation $\mathcal{HT}$ of the cylinder. $L_1$ and $L_2$ indicate the direction of
the axis of the cylinder. Using the terminology of graph theory, a {\em tubulene} $T$ is defined to be the finite graph induced by all
the hexagons of $\mathcal{HT}$ that lie between $c_1$ and $c_2$, where $c_1$ and $c_2$ are two vertex-disjoint cycles of $\mathcal{HT}$ encircling the axis of
the cylinder.  The vector $\overrightarrow{OA}$ is called the {\em chiral vector} of $T$ and  the cycles $c_1$ and $c_2$ are the two open-ends of $T$.

For any  tubulene $T$, if its chiral vector is $ n \overrightarrow{a_1} + m \overrightarrow{a_2}$, $T$ will be called an $(n,m)$-type tubulene, see Figure \ref{zig-zag}. If $T$ is a $(n,m)$-type tubulene where $n=0$ or $m=0$, we call it a \textit{zig-zag tubulene}.

\begin{figure}[!htb]
	\centering
		\includegraphics[scale=0.6, trim=0cm 0cm 1cm 0cm]{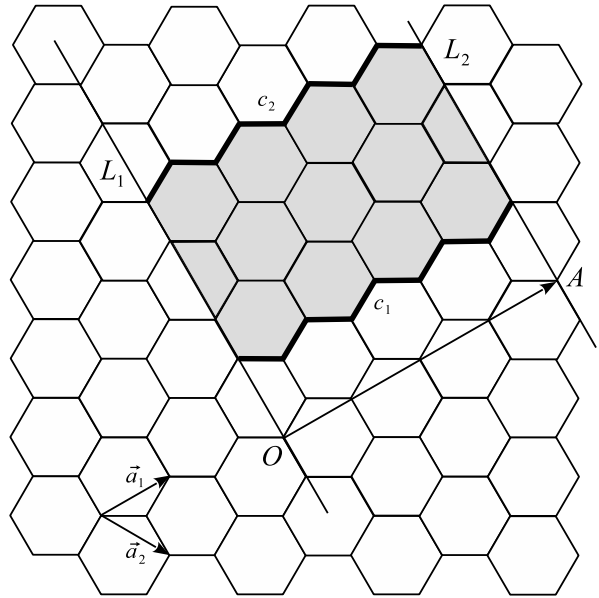}
\caption{Zig-zag tubulene $ZT(3,4)$, which is $(4,0)$-type tubulene.}
	\label{zig-zag}
\end{figure}

\section{The generalized cut method}
\label{sec:cut}

In this section the main result of paper \cite{ghorbani} is generalized such that it is valid for any partition of the edge set which is coarser than $\Theta^*$-partition. We start with the following definition.
\begin{definition}
Let $G$ be a connected graph and $\lbrace F_1, \ldots, F_k \rbrace$ a partition coarser than the $\Theta^*$-partition. For any $x \in V(G)$ and $j \in \lbrace 1, \ldots, k \rbrace$ we denote by $\ell_j(x)$ the connected component of the graph $G- F_j$ which contains $x$.
\end{definition}

The following lemma is crucial for the main result of this section. The ideas for the proof can be found inside the proof of Theorem 3.3 of \cite{nad_klav}. For the sake of completeness we give the proof anyway.

\begin{lemma} \cite{nad_klav} \label{distance}
Let $G$ be a connected graph. If $\lbrace F_1, \ldots, F_k \rbrace$ is a partition coarser than the $\Theta^*$-partition, then for any $x,y \in V(G)$ it holds
$$d_G(x,y) = \sum_{j=1}^k d_{G / F_j}(\ell_j(x),\ell_j(y)).$$
\end{lemma}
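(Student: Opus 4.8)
The plan is to prove the identity in two inequalities, or more efficiently, to establish the upper bound $d_G(x,y) \le \sum_{j=1}^k d_{G/F_j}(\ell_j(x),\ell_j(y))$ via a projection-of-paths argument and the lower bound via a counting argument based on Lemma~\ref{pomozna}. First I would observe that for each $j$, the canonical map $\pi_j : V(G) \to V(G/F_j)$ sending a vertex to its component of $G - F_j$ is a graph homomorphism onto a connected graph, so it is distance-nonincreasing: $d_{G/F_j}(\ell_j(x),\ell_j(y)) \le d_G(x,y)$ for each $j$. More to the point, if $P$ is any $x,y$-path in $G$, then walking along $P$ and recording only the steps that leave $F_j$-components yields a walk in $G/F_j$ from $\ell_j(x)$ to $\ell_j(y)$ whose length is exactly $|E(P) \cap F_j|$; hence $d_{G/F_j}(\ell_j(x),\ell_j(y)) \le |E(P) \cap F_j|$. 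Summing over $j$, since $\{F_1,\dots,F_k\}$ partitions $E(G)$, gives $\sum_{j=1}^k d_{G/F_j}(\ell_j(x),\ell_j(y)) \le \sum_{j=1}^k |E(P)\cap F_j| = |E(P)|$. Taking $P$ to be a shortest $x,y$-path yields the upper bound
$$\sum_{j=1}^k d_{G/F_j}(\ell_j(x),\ell_j(y)) \le d_G(x,y).$$

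For the reverse inequality I would use Lemma~\ref{pomozna}. Fix a shortest $x,y$-path $P$ in $G$. For each $j$, pick a shortest $\ell_j(x),\ell_j(y)$-path in $G/F_j$; lifting each of its edges back to an edge of $F_j$ and concatenating appropriately (inside components one can travel for free, i.e. these intra-component connecting paths use no edges of $F_j$) produces an $x,y$-walk $Q_j$ in $G$ with $|E(Q_j) \cap F_j| = d_{G/F_j}(\ell_j(x),\ell_j(y))$. The key point is that since $F_j$ is a union of $\Theta^*$-classes, Lemma~\ref{pomozna} applies classwise and thus to $F_j$: for every $\Theta^*$-class $E \subseteq F_j$ we have $|E(P) \cap E| \le |E(Q_j) \cap E|$, and summing over the classes contained in $F_j$ gives $|E(P) \cap F_j| \le |E(Q_j) \cap F_j| = d_{G/F_j}(\ell_j(x),\ell_j(y))$. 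Summing this over $j$ and using that the $F_j$ partition $E(G)$ and that $P$ is shortest:
$$d_G(x,y) = |E(P)| = \sum_{j=1}^k |E(P) \cap F_j| \le \sum_{j=1}^k d_{G/F_j}(\ell_j(x),\ell_j(y)).$$
Combining the two displayed inequalities proves the claim.

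The main obstacle, and the step requiring the most care, is the construction of the lifted walk $Q_j$ in $G$ from a shortest path in the quotient $G/F_j$, and in particular verifying that $|E(Q_j) \cap F_j|$ equals the length of that quotient path. One must check that consecutive components along the quotient path can indeed be joined in $G$ using exactly one edge of $F_j$ (by definition of adjacency in $G/F_j$) and that the detours needed inside each component to connect the relevant attachment vertices lie entirely in $G - F_j$, hence contribute nothing to $E(Q_j) \cap F_j$; connectivity of each component makes such detours available. A secondary technical point is the reduction of Lemma~\ref{pomozna} from a single $\Theta^*$-class to the union $F_j$ of several classes, which is immediate by additivity over the classes but should be stated. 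Once these bookkeeping points are in place, the two-sided estimate closes the argument.
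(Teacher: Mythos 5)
Your argument is correct and follows essentially the same route as the paper: project a shortest $x,y$-path $P$ to each quotient to get $d_{G/F_j}(\ell_j(x),\ell_j(y)) \le |E(P)\cap F_j|$, then lift a shortest quotient path and apply Lemma~\ref{pomozna} classwise over the $\Theta^*$-classes comprising $F_j$ to get the reverse inequality, and sum over $j$ using the partition property. The only cosmetic difference is that you call the lift a walk where the paper (and the hypothesis of Lemma~\ref{pomozna}) uses a path; since the quotient path visits pairwise distinct components and the intra-component detours are simple paths, the lift is in fact a path, so nothing is lost.
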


\begin{proof}
Let $x,y \in V(G)$ and let $P$ be a shortest path between $x$ and $y$. Since $\lbrace F_1, \ldots, F_k \rbrace$ is a partition of the edge set, we have
$$d_G(x,y) = |E(P)| = \sum_{j=1}^k |E(P) \cap F_j|.$$
Therefore, it suffices to show that $|E(P) \cap F_j| = d_{G / F_j}(\ell_j(x),\ell_j(y))$ for any $j \in \lbrace 1, \ldots, k \rbrace$.

\noindent
First suppose that $|E(P) \cap F_j| = r$. Obviously, a path of length $r$ can be constructed in $G / F_j$ between $\ell_j(x)$ and $\ell_j(y)$. Hence, $|E(P) \cap F_j| \geq d_{G / F_j}(\ell_j(x),\ell_j(y))$.

\noindent
Finally, suppose that $d_{G / F_j}(\ell_j(x),\ell_j(y)) = r$ and let $\ell_j(x) = C_0, C_1, \ldots, C_{r-1},$ $C_r = \ell_j(y)$ be a shortest path in $G / F_j$ between $\ell_j(x)$ and $\ell_j(y)$. Therefore, a path $Q$ from $x$ to $y$ can be obtained such that $|E(Q) \cap F_j| = r$. Suppose that $F_j$ is the union of $\Theta^*$-classes $E_1, \ldots, E_s$. Since it follows from Lemma \ref{pomozna} that $|E(Q) \cap E_i| \geq |E(P) \cap E_i|$ for any $i \in \lbrace 1, \ldots, s \rbrace$, we obtain $r = |E(Q) \cap F_j| \geq |E(P) \cap F_j|$ and therefore, we also have $|E(P) \cap F_j| \leq d_{G / F_j}(\ell_j(x),\ell_j(y))$. \qed
\end{proof}

\noindent Now everything is prepared for the final result of this section.

\begin{theorem} \label{gra_pis}
Let $G$ be a connected graph and let $V_1, \ldots, V_t$ be the orbits under the natural action of the group $\aut(G)$ on $V(G)$. If $\lbrace F_1, \ldots, F_k \rbrace$ is a partition coarser than the $\Theta^*$-partition, then
$$\widehat{W}(G) = |V(G)| \sum_{i=1}^t \frac{1}{|V_i|} \sum_{j=1}^k W(G / F_j, w_{ij}),$$
where $w_{ij}(C) = |V_i \cap C|$ for any $i \in \lbrace 1, \ldots, t \rbrace$, $j \in \lbrace 1, \ldots, k \rbrace$, and $C \in V(G / F_j)$.
\end{theorem}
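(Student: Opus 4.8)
The plan is to build directly on the orbit formula \eqref{formula}, namely $\widehat{W}(G) = |V(G)| \sum_{i=1}^t \frac{1}{|V_i|} W(V_i)$, and to reduce the theorem to a single identity per orbit: I will show that $W(V_i) = \sum_{j=1}^k W(G / F_j, w_{ij})$ for every $i \in \{1,\ldots,t\}$. Once this is established, substituting it into \eqref{formula} gives the claim at once, since the factor $|V(G)|/|V_i|$ is independent of $j$ and can be pulled through the sum over $j$.

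To prove that orbit identity I would expand $W(V_i) = \frac{1}{2} \sum_{u \in V_i} \sum_{v \in V_i} d_G(u,v)$ by definition and apply Lemma \ref{distance}, which gives $d_G(u,v) = \sum_{j=1}^k d_{G / F_j}(\ell_j(u),\ell_j(v))$ precisely because $\{F_1,\ldots,F_k\}$ is coarser than the $\Theta^*$-partition. Interchanging the finite summations yields $W(V_i) = \sum_{j=1}^k \left( \frac{1}{2} \sum_{u \in V_i} \sum_{v \in V_i} d_{G / F_j}(\ell_j(u),\ell_j(v)) \right)$, so it remains only to recognize the inner double sum, for a fixed $j$, as $W(G / F_j, w_{ij})$.

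For that last step the key observation is that the vertex set of $G / F_j$, namely the set of connected components of $G - F_j$, partitions $V(G)$ and hence partitions $V_i$ as well; thus each $u \in V_i$ lies in exactly one component $\ell_j(u) = C$, and the number of $u \in V_i$ with $\ell_j(u) = C$ is exactly $|V_i \cap C| = w_{ij}(C)$. Grouping the ordered pairs $(u,v) \in V_i \times V_i$ by the pair of components $(\ell_j(u),\ell_j(v)) = (C,C')$ then converts $\frac{1}{2}\sum_{u \in V_i}\sum_{v \in V_i} d_{G/F_j}(\ell_j(u),\ell_j(v))$ into $\frac{1}{2}\sum_{C \in V(G/F_j)}\sum_{C' \in V(G/F_j)} w_{ij}(C)\,w_{ij}(C')\,d_{G/F_j}(C,C')$, which is $W(G/F_j, w_{ij})$ by the definition of the vertex-weighted Wiener index.

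The substantive ingredient is Lemma \ref{distance}, which already absorbs the subtlety that the $F_j$ need not be individual $\Theta^*$-classes; given that lemma, the remaining work is reindexing of finite sums plus the partition-of-vertices observation, so I do not expect a real obstacle in the theorem's proof itself — the delicate point has been quarantined into the preceding lemma.
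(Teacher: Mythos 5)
Your proposal is correct and follows essentially the same route as the paper: start from Equation~\eqref{formula}, apply Lemma~\ref{distance} to decompose each distance over the quotient graphs, interchange the finite sums, and recognize the resulting inner sum as $W(G/F_j,w_{ij})$ by counting pairs of orbit vertices via the components they lie in. The only cosmetic difference is that you work with ordered pairs and the factor $\tfrac12$ while the paper uses unordered pairs, which is immaterial since $d_{G/F_j}(C,C)=0$.
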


\begin{proof}
From Equation \ref{formula} and Lemma \ref{distance} we obtain
\begin{eqnarray*}
\widehat{W}(G) & = & |V(G)| \sum_{i=1}^t \frac{1}{|V_i|} \sum_{\lbrace x,y \rbrace \subseteq V_i}d_G(x,y) \\
& = & |V(G)| \sum_{i=1}^t \frac{1}{|V_i|} \sum_{\lbrace x,y \rbrace \subseteq V_i} \Bigg( \sum_{j=1}^k d_{G / F_j}(\ell_j(x),\ell_j(y)) \Bigg) \\
& = & |V(G)| \sum_{i=1}^t \frac{1}{|V_i|} \Bigg( \sum_{\lbrace x,y \rbrace \subseteq V_i}  \sum_{j=1}^k d_{G / F_j}(\ell_j(x),\ell_j(y)) \Bigg) \\
& = & |V(G)| \sum_{i=1}^t \frac{1}{|V_i|} \Bigg( \sum_{j=1}^k  \sum_{\lbrace x,y \rbrace \subseteq V_i} d_{G / F_j}(\ell_j(x),\ell_j(y)) \Bigg) \\
& = & |V(G)| \sum_{i=1}^t \frac{1}{|V_i|}  \sum_{j=1}^k  \Bigg( \sum_{\lbrace x,y \rbrace \subseteq V_i} d_{G / F_j}(\ell_j(x),\ell_j(y)) \Bigg).
\end{eqnarray*}

\noindent
Obviously, for $C, D \in V(G / F_j), C \neq D$ it holds that the number of unordered pairs $\lbrace x, y \rbrace \subseteq V_i$ for which $x \in C$, $y \in D$ is exactly $w_{ij}(C)w_{ij}(D)$. Therefore, it follows

\begin{eqnarray*}
\widehat{W}(G) & = & |V(G)| \sum_{i=1}^t \frac{1}{|V_i|}  \sum_{j=1}^k  \Bigg( \sum_{\lbrace C,D \rbrace \subseteq V(G / F_j)} w_{ij}(C)w_{ij}(D)d_{G / F_j}(C,D) \Bigg) \\
& = & |V(G)| \sum_{i=1}^t \frac{1}{|V_i|} \sum_{j=1}^k W(G / F_j, w_{ij})
\end{eqnarray*}

\noindent and the proof is complete. \qed
\end{proof}

In the rest of this section we show with an example how Theorem \ref{gra_pis} can be used. Let $T$ be a tree from Figure \ref{primer_drevo}. 

\begin{figure}[H] 
\begin{center}
\includegraphics[scale=0.7]{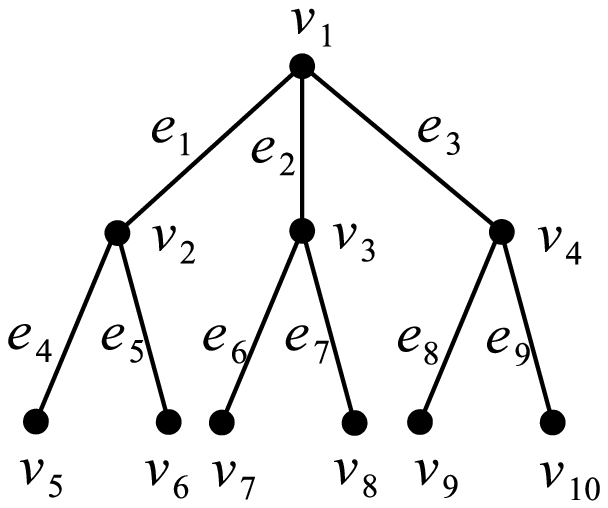}
\end{center}
\caption{\label{primer_drevo} Tree $T$.}
\end{figure}

It is easy to see that the natural action of the group ${\aut}(T)$ on $V(T)$ has three orbits: $V_1 = \lbrace v_1 \rbrace$, $V_2 = \lbrace v_2, v_3, v_4 \rbrace$, and $V_3 = \lbrace v_5, v_6, v_7, v_8, v_9, v_{10} \rbrace$. Since in a tree every $\Theta^*$-class is a single edge, the sets $F_1 = \lbrace e_1, e_2, e_3\rbrace$ and $F_2=\lbrace e_4, e_5, e_6, e_7, e_8, e_9 \rbrace$ form a partition coarser then $\Theta^*$-partition. Hence we obtain $6$ weighted quotient trees and all of them are stars. Figure \ref{kvocienti} represents all the weighted quotient graphs with at least two vertices of weight different from $0$. Therefore, using Theorem \ref{gra_pis} we obtain
$$\widehat{W}(T) = W(T / F_1, w_{21}) + W(T / F_1, w_{31}) + W(T / F_2, w_{32}) = 6 + 24 + 30 = 60.$$

\begin{figure}[H] 
\begin{center}
\includegraphics[scale=0.6]{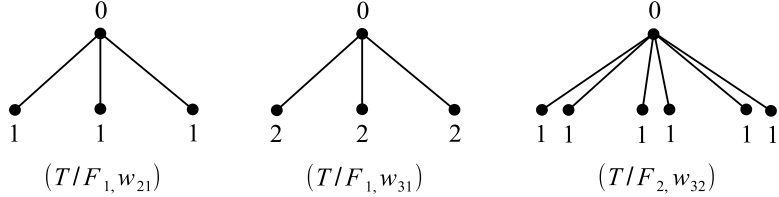}
\end{center}
\caption{\label{kvocienti} Weighted quotient graphs $(T / F_1, w_{21})$, $(T / F_1, w_{31})$, and $(T / F_2, w_{32})$.}
\end{figure}

\section{Zig-zag tubulenes}
\label{sec:zig_zag}

 Let $T$ be a zig-zag tubulene such that $c_1, c_2$ are the shortest possible cycles encircling the axis of the cylinder (see Figure \ref{zig-zag}). If $T$ has $n$ layers of hexagons, each containing exactly $h$ hexagons, then we denote it by $ZT(n,h)$. In this section we compute the Graovac-Pisanski index for $ZT(n,h)$. We always assume that $n \geq 1$ and $h \geq 2$. Moreover, let $C_1$ and $C_2$ be subgraphs of $ZT(n,h)$ induced by $c_1$ and $c_2$, respectively.

Obviously, $ZT(n,h)$ has $n+1$ layers of vertices and every layer has two types of vertices, i.e. type $0$ and type $1$. The set of vertices of type $k$ in layer $i$ is denoted by $V^k_i$. Moreover, let the vertices in $V^k_i$ be denoted as follows: $V^k_i = \lbrace v^k_{i,0}, \ldots, v^k_{i,h-1} \rbrace$. See Figure \ref{zig-zag1} for an example.

\begin{figure}[!htb]
	\centering
		\includegraphics[scale=0.8, trim=0cm 0cm 1cm 0cm]{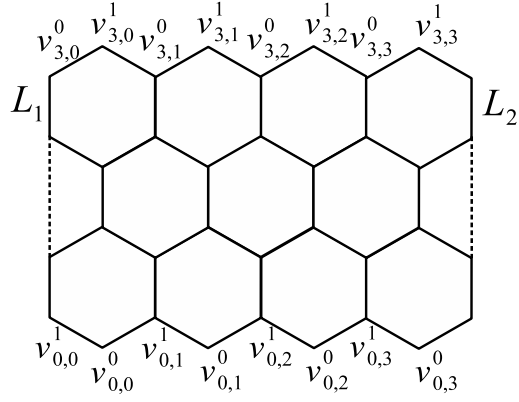}
\caption{Zig-zag tubulene $ZT(3,4)$ with vertices in $V^0_0$, $V^1_0$, $V^0_3$, and $V^1_3$.}
	\label{zig-zag1}
\end{figure}


\subsection{The automorphisms of zig-zag tubulenes}

In this subsection we show that when $n$ is odd, the automorphism group of the graph $ZT(n,h)$ is isomorphic to the direct product of the dihedral group $D_h$ and the cyclic group $\mathbb{Z}_2$. Also, the orbits under the natural action are obtained. First, two lemmas are needed.

\begin{lemma}
\label{le1}
Let $\varphi: V(ZT(n,h)) \rightarrow V(ZT(n,h))$ be an automorphism. Then the graph induced on the vertices in the set $\varphi(V(C_1))$ is either $C_1$ or $C_2$.
\end{lemma}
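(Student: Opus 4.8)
The plan is to recover the vertex sets $V(C_1)$ and $V(C_2)$ from data that any automorphism of $ZT(n,h)$ must preserve, namely vertex degrees, distances, and neighbourhoods. The first step is to describe the degree-$2$ vertices. By the construction of a tubulene, a vertex not lying on $c_1\cup c_2$ is an interior vertex of the strip, so all three of its neighbours in the hexagonal lattice are present and it has degree $3$; a vertex of $c_i$ has degree $2$ precisely when its third lattice-neighbour lies outside the strip. Using the labelling $V(C_i)=V^0_i\cup V^1_i$ fixed above (and visible in Figure~\ref{zig-zag1}), one checks that the degree-$2$ vertices of $ZT(n,h)$ are exactly the $2h$ ``tips'' of the two rims, $h$ of them on $C_1$ and $h$ of them on $C_2$; write $D_i\subseteq V(C_i)$ for these two sets. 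The tips are pairwise non-adjacent and alternate with the $h$ degree-$3$ vertices around the $2h$-cycle $C_i$, so every degree-$3$ vertex of $C_i$ has a neighbour in $D_i$, while every neighbour of a tip already lies on $C_i$. Hence
$$V(C_1)=D_1\cup N(D_1)\qquad\text{and}\qquad V(C_2)=D_2\cup N(D_2),$$
where $N(\cdot)$ denotes the neighbourhood in $ZT(n,h)$.

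Next I would separate $D_1$ and $D_2$ by distances. Two consecutive tips of a rim $C_i$ are non-adjacent but joined by a length-$2$ subpath of $C_i$, hence lie at distance exactly $2$ in $ZT(n,h)$; since $h\ge 2$, walking around $C_i$ shows that $D_i$ is connected for the relation ``at distance at most $2$''. On the other hand, for $x\in D_1$ and $y\in D_2$ no path of length $1$ or $2$ can exist: a length-$1$ path would force $y\in N(D_1)\subseteq V(C_1)$, and a length-$2$ path through some vertex $z$ would force $z\in N(D_1)\cap N(D_2)\subseteq V(C_1)\cap V(C_2)=\emptyset$. Therefore $D_1$ and $D_2$ are exactly the two connected components of the auxiliary graph on the set $\Delta$ of degree-$2$ vertices of $ZT(n,h)$ whose edges are the pairs of vertices at distance at most $2$.

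Now let $\varphi$ be an automorphism of $ZT(n,h)$. Since $\varphi$ preserves degrees, $\varphi(\Delta)=\Delta$; since it preserves distances, $\varphi$ restricts to an automorphism of the auxiliary graph above and so permutes its two components, giving $\varphi(D_1)\in\{D_1,D_2\}$. As automorphisms commute with taking neighbourhoods, $\varphi(N(D_i))=N(\varphi(D_i))$; hence if $\varphi(D_1)=D_1$ then $\varphi(V(C_1))=\varphi(D_1)\cup\varphi(N(D_1))=D_1\cup N(D_1)=V(C_1)$, and if $\varphi(D_1)=D_2$ then $\varphi(V(C_1))=D_2\cup N(D_2)=V(C_2)$. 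Since $C_i$ is by definition the subgraph of $ZT(n,h)$ induced by $V(C_i)$, it follows that the graph induced by $\varphi(V(C_1))$ equals $C_1$ in the first case and $C_2$ in the second, which is the assertion.

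I expect the only non-routine ingredient to be the structural identity $V(C_i)=D_i\cup N(D_i)$, i.e. correctly pinning down the degree-$2$ vertices and the alternating degree pattern along each rim; once that is available, the rest is a mechanical use of degree-, distance- and neighbourhood-preservation. The point to watch is the degenerate range, especially $n=1$ (and within it $h=2$), where $V(C_1)\cup V(C_2)$ exhausts $V(ZT(n,h))$ and the two rims sit close together; there one should double-check that $d_{ZT(n,h)}(x,y)\ge 3$ for all $x\in D_1,\ y\in D_2$, which the length-$1$/length-$2$ argument above still yields because it only uses $N(D_i)\subseteq V(C_i)$ and $V(C_1)\cap V(C_2)=\emptyset$.
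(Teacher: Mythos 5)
Your argument is correct, and it reaches the conclusion by a genuinely different (and more fully justified) route than the paper. The paper's proof is a two-line assertion: $ZT(n,h)$ contains exactly two disjoint cycles of length $2h$ having exactly $h$ vertices of degree $2$, namely $C_1$ and $C_2$, and since automorphisms preserve cycle length and degree, $\varphi(C_1)\in\{C_1,C_2\}$. The nontrivial content there is the uniqueness claim --- that no other $2h$-cycle (for instance one of the $n-1$ interior ``layer'' cycles, or a cycle weaving between layers, or, when $h=3$, a hexagonal face) carries $h$ degree-$2$ vertices --- and the paper does not argue this. You avoid that classification problem entirely: instead of characterizing the rims as distinguished cycles, you reconstruct the vertex sets $V(C_i)=D_i\cup N(D_i)$ from the degree-$2$ vertices, split $D_1$ from $D_2$ using the automorphism-invariant ``distance at most $2$'' relation, and then push everything through $\varphi$ using preservation of degrees, distances and neighbourhoods. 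Both proofs rest on the same structural observation (the degree-$2$ vertices sit only on the two rims and alternate there with degree-$3$ vertices), but your version replaces the paper's unproved uniqueness-of-cycles claim with checks that are each immediate, and you correctly flag and handle the only delicate regime, $n=1$, where the two rims exhaust the vertex set. The one small thing worth making explicit is the verification that \emph{all} vertices outside $V^0_0\cup V^1_n$ have degree $3$ (interior vertices and the degree-$3$ rim vertices alike), which you assert from the lattice construction; that is easily checked and matches the labelling used later in the paper, where $V^1_0$ is explicitly treated as consisting of degree-$3$ vertices.
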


\begin{proof}
The graph $ZT(n,h)$ contains exactly two disjoint cycles of length $2h$ with exactly $h$ vertices of degree 2 in the graph $ZT(n,h)$. These two are $C_1$ and $C_2$. Therefore, any automorphism maps $C_1$ to either $C_1$ or $C_2$ and the proof is complete. \qed
\end{proof}

\begin{lemma}
\label{le2}
Let $\varphi: V(C_1) \rightarrow V(C_i)$ be an isomorphism between subgraphs $C_1$ and $C_i$, where $i \in \lbrace 1,2 \rbrace$. Then there is exactly one automorphism $\overline{\varphi}: V(ZT(n,h)) \rightarrow V(ZT(n,h))$ such that $\varphi(x) = \overline{\varphi}(x)$ for any $x \in V(C_1)$.
\end{lemma}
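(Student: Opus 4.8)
The plan is to build $\overline{\varphi}$ one vertex-layer at a time, starting from $C_1$ and moving inward through the tube; the construction turns out to be forced at every step, so it yields existence and uniqueness simultaneously.

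First I would record the local structure of $ZT(n,h)$ that drives the argument. From the description of the tube one reads off: each layer $V_\ell=V^0_\ell\cup V^1_\ell$ induces a $2h$-cycle on which type-$0$ and type-$1$ vertices alternate; the only edges between $V_\ell$ and $V_{\ell+1}$ form a matching joining each type-$1$ vertex of $V_\ell$ to a type-$0$ vertex of $V_{\ell+1}$; consequently, for $1\le\ell$ every vertex of $V^0_\ell$ has a unique neighbour outside $V_\ell$ (it lies in $V_{\ell-1}$), for $\ell\le n-1$ every vertex of $V^1_\ell$ has a unique neighbour outside $V_\ell$ (it lies in $V_{\ell+1}$), and the degree-$3$ vertices of $C_1$ (resp.\ $C_2$) each have a unique neighbour off that cycle. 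Moreover, for $h\ge 3$ two cyclically consecutive type-$0$ vertices of an inner layer have a unique common neighbour inside that layer (the type-$1$ vertex between them), and symmetrically for type-$1$ vertices; the tube $ZT(n,2)$ is small enough to be dealt with by hand, so I would assume $h\ge 3$ from here on. One should also note at the outset that, since automorphisms preserve degree, $\varphi$ sends the degree-$2$ vertices of $C_1$ to the degree-$2$ vertices of $C_i$, and the same must hold for $\overline{\varphi}$.

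By Lemma \ref{le1}, an automorphism carries $V(C_1)$ to $V(C_1)$ or to $V(C_2)$, so any extension of $\varphi$ must carry vertex layer $\ell$ to layer $\ell$ (if $i=1$) or to layer $n-\ell$ (if $i=2$). I would then define $\overline{\varphi}$ on $V_1,\ldots,V_n$ by recursion on $\ell$, maintaining the invariant that $\overline{\varphi}$ restricted to $V_0\cup\cdots\cup V_\ell$ is an adjacency-preserving bijection onto the corresponding union of image layers. Given $\overline{\varphi}$ on the layers up to $V_\ell$: each $v^0_{\ell+1,j}$ has its unique outside neighbour $v^1_{\ell,j}$ in $V_\ell$, so $\overline{\varphi}(v^0_{\ell+1,j})$ is forced to be the unique vertex adjacent to $\overline{\varphi}(v^1_{\ell,j})$ that does not already lie in the image of $V_0\cup\cdots\cup V_\ell$, and such a vertex exists and is unique by the local structure read on the image side; then each $v^1_{\ell+1,j}$ is the unique common $V_{\ell+1}$-neighbour of the two already-assigned type-$0$ vertices $v^0_{\ell+1,j}$ and $v^0_{\ell+1,j+1}$, so $\overline{\varphi}(v^1_{\ell+1,j})$ is likewise forced. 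One then checks that the map just defined on $V_{\ell+1}$ is a bijection onto the image layer respecting both the $2h$-cycle of $V_{\ell+1}$ and the matching between $V_\ell$ and $V_{\ell+1}$; this is a direct computation with the adjacency rules of $ZT(n,h)$, using that $\overline{\varphi}|_{V_\ell}$ is already known to be a rotation or a reflection of that layer's cycle.

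Running the recursion to $\ell=n$ produces a bijection on $V(ZT(n,h))$ preserving every edge, i.e.\ an automorphism extending $\varphi$, which gives existence; uniqueness is immediate, since at every stage the value of $\overline{\varphi}$ was the only one compatible with being an automorphism agreeing with $\varphi$ on $V(C_1)$, so two such extensions must agree layer by layer and hence everywhere. The main obstacle is the bookkeeping in the recursive step: verifying that ``unique outside neighbour'' followed by ``unique common neighbour inside the layer'' really reproduces the correct layer-to-layer matching rather than a mismatch that breaks an inter-layer edge; this reduces to chasing the single relation $v^1_{\ell,j}\sim v^0_{\ell+1,j}$ through the known form of $\overline{\varphi}|_{V_\ell}$. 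The only genuinely special case is $h=2$, where an inner type-$1$ vertex shares two common neighbours with each of its type-$0$ layer-mates, so ``unique common neighbour'' fails; there I would instead pin such a vertex down through its neighbour in the next layer, or simply check the few small tubes directly.
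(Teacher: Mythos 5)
Your proposal follows essentially the same route as the paper: extend $\varphi$ layer by layer, forcing the image of each type-$0$ vertex as the unique remaining neighbour of an already-mapped degree-$3$ vertex and the image of each type-$1$ vertex as the common neighbour of two already-mapped type-$0$ vertices, with uniqueness falling out of the forcing. You are in fact slightly more careful than the paper, which asserts ``exactly one common neighbour'' without excluding the degenerate case $h=2$ (where two type-$0$ vertices of a layer share two common neighbours) that you correctly flag for separate treatment.
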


\begin{proof}
Let $\varphi: V(C_1) \rightarrow V(C_i)$ be an isomorphism where $i \in \lbrace 1,2 \rbrace$. For any $x \in V(C_1) = V^0_0 \cup V^1_0$ we define $\overline{\varphi}(x) = \varphi(x)$. In the rest of the proof we will define function $\overline{\varphi}$ step by step such that every edge will be mapped to an edge and $\overline{\varphi}$ will be a bijection.

First let $x \in V^0_1$. Then there is exactly one $y \in V^1_0$ such that $x$ and $y$ are adjacent. Since the degree of $y$ is 3, let $y_1$ and $y_2$ be the other two neighbours of $y$ in $ZT(n,h)$. Obviously, $\overline{\varphi}(y), \overline{\varphi}(y_1)$, and $\overline{\varphi}(y_2)$ are already define and it holds that $\overline{\varphi}(y_1)$ and $\overline{\varphi}(y_2)$ are both adjacent to $\overline{\varphi}(y)$. Since the degree of $\overline{\varphi}(y)$ is 3, we define $\overline{\varphi}(x)$ to be the neighbour of $\overline{\varphi}(y)$, different from $\overline{\varphi}(y_1)$ and $\overline{\varphi}(y_2)$. This can be done for any $x \in V^0_1$.

Now let $x \in V^1_1$. Then there are exactly two vertices $y_1,y_2 \in V^0_1$ that are adjacent to $x$. It is easy to see that $\overline{\varphi}(y_1)$, $\overline{\varphi}(y_2)$ are already defined and that they have exactly one common neighbour. We define $\overline{\varphi}(x)$ to be the common neighbour of $\overline{\varphi}(y_1)$ and $\overline{\varphi}(y_2)$. This can be done for any $x \in V^1_1$. 

With the procedure above we have defined function $\overline{\varphi}$ on the set of vertices $V^0_0 \cup V^1_0 \cup V^0_1 \cup V^1_1$ such that for any two adjacent vertices $x,y \in V^0_0 \cup V^1_0 \cup V^0_1 \cup V^1_1$, it holds that $\overline{\varphi}(x)$ and $\overline{\varphi}(y)$ are also adjacent. Using induction, we can define function $\overline{\varphi}$ on the set $V(ZT(n,h))$ such that for any two adjacent vertices $x,y \in V(ZT(n,h))$ it holds that $\overline{\varphi}(x)$ and $\overline{\varphi}(y)$ are adjacent. Since $\overline{\varphi}$ is also bijective, it is an automorphism of the graph $ZT(n,h)$. It follows from the construction that $\overline{\varphi}$ is also unique. Therefore, the proof is complete. \qed
\end{proof}

\begin{theorem}
\label{glavni}
The automorphism group of the graph $ZT(n,h)$, where $n$ is odd, is isomorphic to the direct product of the dihedral group $D_h$ and the cyclic group $\mathbb{Z}_2$, i.e.
$${\aut}(ZT(n,h))\cong D_h \times \mathbb{Z}_2.$$
\end{theorem}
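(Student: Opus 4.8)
The plan is to use Lemmas~\ref{le1} and~\ref{le2} to translate the problem into the combinatorics of the boundary cycle $C_1$: first to count $\aut(ZT(n,h))$ and obtain $|\aut(ZT(n,h))|=4h$, then to exhibit an explicit subgroup isomorphic to $D_h\times\mathbb{Z}_2$, and finally to conclude from the order that this subgroup is all of $\aut(ZT(n,h))$. The hypothesis that $n$ is odd will be used only at the second of these steps.

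For the count, recall that an automorphism of $ZT(n,h)$ preserves vertex degrees, so by Lemma~\ref{le1} the restriction map $\varphi\mapsto\varphi|_{V(C_1)}$ sends $\varphi\in\aut(ZT(n,h))$ to an isomorphism of $C_1$ onto $C_1$ or onto $C_2$ that, moreover, carries the $h$ vertices of $C_1$ of degree $2$ in $ZT(n,h)$ onto the $h$ degree-$2$ vertices of the image; call such an isomorphism \emph{admissible}. By Lemma~\ref{le2} this restriction map is a bijection of $\aut(ZT(n,h))$ onto the set of admissible isomorphisms of $C_1$ onto $C_1$ or onto $C_2$. Since $C_1$ and $C_2$ are cycles of length $2h$ on which degree-$2$ and degree-$3$ vertices alternate, the degree-$2$ vertices form one colour class of the cycle, so an admissible automorphism of $C_1$ is a colour-class-preserving automorphism of $C_{2h}$; there are exactly $2h$ of these — the $h$ rotations by an even number of steps and the $h$ reflections through a pair of antipodal vertices — and they form a copy of $D_h$. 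Since turning the tube over is an automorphism interchanging its two open ends, admissible isomorphisms $C_1\to C_2$ also exist and form a coset of that group, so there are $2h$ of them as well. Hence $|\aut(ZT(n,h))|=4h$ (for every $n$).

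Next I would write down generators of the claimed subgroup. Let $\rho$ be the rotation by one hexagon about the axis, $\rho(v^k_{i,j})=v^k_{i,j+1}$ with the second subscript taken mod $h$; it has order $h$ and fixes $C_1$ and $C_2$ setwise. Let $\sigma$ be a reflection in a plane containing the axis; it is an involution fixing $C_1$, $C_2$ setwise, with $\sigma\rho\sigma^{-1}=\rho^{-1}$, so $\langle\rho,\sigma\rangle\cong D_h$. Using that $n$ is odd, the centre of $ZT(n,h)$ lies at the centre of the middle layer of hexagons; reflection $\tau$ in the horizontal plane through that point is an automorphism mapping layer $i$ to layer $n-i$ (with an appropriate relabelling of the two vertex types), it interchanges $C_1$ with $C_2$, it is an involution, it lies outside $\langle\rho,\sigma\rangle$ (each element of which fixes every layer setwise), and, being a reflection in a plane orthogonal to the axis, it commutes with $\rho$ and with $\sigma$. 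Therefore $\langle\rho,\sigma,\tau\rangle=\langle\rho,\sigma\rangle\times\langle\tau\rangle\cong D_h\times\mathbb{Z}_2$ is a subgroup of $\aut(ZT(n,h))$ of order $4h$; comparing with the count gives $\aut(ZT(n,h))=\langle\rho,\sigma,\tau\rangle\cong D_h\times\mathbb{Z}_2$.

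The main obstacle is the construction of $\tau$ and the verification that it commutes with $\rho$ and $\sigma$, and this is precisely where ``$n$ odd'' enters: for $n$ even the only end-reversing symmetry passes through a layer of \emph{vertices} rather than a layer of hexagons and behaves like a half-turn about a horizontal axis, conjugating $\rho$ to $\rho^{-1}$, so one is led to $D_{2h}$ and not to the direct product. A subsidiary point to watch is that Lemma~\ref{le2} is applied only to admissible isomorphisms — an isomorphism of $C_1$ onto $C_1$ or $C_2$ moving a degree-$2$ vertex to a degree-$3$ vertex cannot extend to an automorphism at all — and it is this restriction to admissible maps that makes the count come out to $4h$ rather than $8h$.
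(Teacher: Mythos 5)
Your proposal is correct and follows essentially the same route as the paper: both use Lemmas~\ref{le1} and~\ref{le2} to reduce everything to the (degree-preserving) isomorphisms from $C_1$ onto $C_1$ or $C_2$, identify the end-preserving part with $D_h$, and obtain the $\mathbb{Z}_2$ factor from a distinguished end-swapping involution (your $\tau$ plays the role of the paper's $\overline{\alpha}$). The only difference is one of detail: where the paper writes ``it is not difficult to check that $f$ is a group isomorphism,'' you make explicit the order count $|\aut(ZT(n,h))|=4h$, the admissibility caveat in Lemma~\ref{le2}, and the fact that $n$ odd is exactly what guarantees a \emph{central} end-swapping involution.
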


\begin{proof}
Lemma \ref{le1} and Lemma \ref{le2} imply that the automorphism group of the graph $ZT(n,h)$ is uniquely defined by all the isomorphisms between the subgraph $C_1$ and $C_i$, where $i \in \lbrace 1, 2 \rbrace$. The subgraph $C_1$ is a cycle of length $2h$, but any automorphism maps any vertex into a vertex of the same degree. Therefore, the automorphism group of the subgraph $C_1$ in isomorphic to the symmetric group of the $h$-gon, i.e. the dihedral group $D_h$.

For any isomorphism $\varphi: V(C_1) \rightarrow V(C_i)$ between subgraphs $C_1$ and $C_i$, $i =1,2$ we denote by $\overline{\varphi}$ uniquely defined automorphism of $ZT(n,h)$ obtained as in Lemma \ref{le2}. Moreover, let $\alpha: V(C_1) \rightarrow V(C_2)$ be a fixed isomorphism, such that $\alpha(v_{0,j}^k)=v_{n,j}^{1-k}$. To prove the theorem, we define a function $f: D_h \times \mathbb{Z}_2 \rightarrow {\aut}(ZT(n,h)) $ as follows. For any $(\beta, k) \in D_h \times \mathbb{Z}_2$, $k \in \lbrace 0, 1 \rbrace$, we define
$$f((\beta,k)) = \begin{cases}
\overline{\beta}, & \textrm{if } k=0 \\
\overline{\alpha} \circ \overline{\beta}, & \textrm{if } k = 1.
\end{cases}
$$ It is not difficult to check that $f$ is a group isomorphism. Therefore, we are done. \qed
\end{proof}

Finally, we obtain the following theorem. 
\begin{theorem}
The orbits under the natural action of the group ${\aut}(ZT(n,h))$ on   the set $V(ZT(n,h))$ are:
\begin{itemize}
\item [1.] if $n$ is odd
$$O^0_i = V^0_i \cup V^1_{n-i}, \ i \in \Big\lbrace 0, \ldots, \frac{n-1}{2} \Big\rbrace, $$
$$O^1_i = V^1_i \cup V^0_{n-i}, \ i \in \Big\lbrace 0, \ldots, \frac{n-1}{2}  \Big\rbrace. $$
\item [2.] if $n$ is even

$$O^0_i = V^0_i \cup V^1_{n-i}, \ i \in \Big\lbrace 0, \ldots, \frac{n-2}{2} \Big\rbrace, $$
$$O^1_i = V^1_i \cup V^0_{n-i}, \ i \in \Big\lbrace 0, \ldots, \frac{n-2}{2}  \Big\rbrace, $$
$$O_{\frac{n}{2}} = V^0_{\frac{n}{2}} \cup V^1_{\frac{n}{2}}.$$
\end{itemize}

\end{theorem}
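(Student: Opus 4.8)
The plan is to compute the orbit of an arbitrary vertex directly, using the description of ${\aut}(ZT(n,h))$ furnished by Lemmas \ref{le1} and \ref{le2}; for odd $n$ this group is the $D_h\times\mathbb{Z}_2$ of Theorem \ref{glavni}, but the argument below does not really use the precise group structure and so covers even $n$ as well. The starting point is to isolate three kinds of automorphisms together with their effect on the layer--type partition $\{V^k_i : 0\le i\le n,\ k\in\{0,1\}\}$. First, the rotation $\rho$ of the cylinder, $\rho(v^k_{i,j})=v^k_{i,j+1}$ with the second index read modulo $h$: its restriction to $V(C_1)$ is an isomorphism of $C_1$ onto $C_1$, so by the uniqueness statement of Lemma \ref{le2} the automorphism $\rho$ is its own (unique) extension; thus $\langle\rho\rangle\cong\mathbb{Z}_h$ is a subgroup of ${\aut}(ZT(n,h))$ and it acts transitively on each $V^k_i$. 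Second, any automorphism $\gamma$ with $\gamma(V(C_1))=V(C_1)$ equals $\overline{\varphi}$ for the isomorphism $\varphi=\gamma|_{V(C_1)}:C_1\to C_1$, and unwinding the inductive construction in the proof of Lemma \ref{le2} shows $\gamma(V^k_i)=V^k_i$ for every $i,k$. Third, the end--swap $\alpha:V(C_1)\to V(C_2)$, $\alpha(v^k_{0,j})=v^{1-k}_{n,j}$, extends uniquely to $\overline{\alpha}\in{\aut}(ZT(n,h))$, and the same unwinding (equivalently, the layer--by--layer way in which $ZT(n,h)$ is built) gives $\overline{\alpha}(V^k_i)=V^{1-k}_{n-i}$ for all $i,k$.

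Next I would argue that these exhaust ${\aut}(ZT(n,h))$ in the sense needed. By Lemma \ref{le1}, an automorphism $\gamma$ maps $V(C_1)$ onto $V(C_1)$ or onto $V(C_2)$; in the first case $\gamma$ is of the second kind above, and in the second case $\overline{\alpha}^{-1}\circ\gamma$ maps $V(C_1)$ onto $V(C_1)$, hence is of the second kind, so $\gamma=\overline{\alpha}\circ\delta$ with $\delta$ fixing every $V^k_i$. Consequently, for a vertex $v^0_{i,j}$ every automorphism sends it into $V^0_i\cup V^1_{n-i}$, so its orbit is contained in $V^0_i\cup V^1_{n-i}$; on the other hand, powers of $\rho$ already reach all of $V^0_i$, and $\overline{\alpha}$ followed by powers of $\rho$ reaches all of $V^1_{n-i}$. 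Hence the orbit of $v^0_{i,j}$ is exactly $O^0_i:=V^0_i\cup V^1_{n-i}$, and symmetrically the orbit of $v^1_{i,j}$ is $O^1_i:=V^1_i\cup V^0_{n-i}$.

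It then remains to list these orbits without repetition. Since $\{V^k_i\}$ is a partition of $V(ZT(n,h))$, distinct sets $V^k_i$ are disjoint, and one checks that the only coincidence among the $O$'s is $O^0_i=O^1_{n-i}$. Therefore, when $n$ is odd the sets $O^0_i,O^1_i$ with $i\in\{0,\dots,\frac{n-1}{2}\}$ are pairwise disjoint and, meeting every layer of every type, cover $V(ZT(n,h))$; when $n$ is even the sets $O^0_i,O^1_i$ with $i\in\{0,\dots,\frac{n-2}{2}\}$ together with $O_{n/2}:=V^0_{n/2}\cup V^1_{n/2}$ (which coincides with both $O^0_{n/2}$ and $O^1_{n/2}$) do the same. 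This yields the two cases in the statement.

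The main obstacle is entirely contained in the two ``unwinding'' claims used above: that an automorphism fixing $V(C_1)$ setwise preserves every layer $V^k_i$, and that $\overline{\alpha}$ reflects the layer index while flipping the type. Both should follow from a short induction on $i$ paralleling the induction in the proof of Lemma \ref{le2}: once $\overline{\varphi}$ has been determined on layers $0,\dots,i$, the defining recipe (take the not-yet-used neighbour, resp.\ the common neighbour, of already-mapped vertices) forces the image of layer $i+1$ to be layer $i+1$, resp.\ layer $n-i-1$. I expect this to be routine, though it is the only mildly tedious part of the proof.
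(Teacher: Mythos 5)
Your proof is correct, and it reaches the same conclusion by a noticeably different route in the ``no larger orbit'' direction. For the existence half both arguments are essentially identical: the paper also invokes the extension construction of Lemma \ref{le2} to produce automorphisms realizing the rotation within a layer and the end-swap $\overline{\alpha}$. The difference is in showing the orbits are no bigger. The paper uses the metric invariant $\min\lbrace d(x,C_1),d(x,C_2)\rbrace$ (preserved because automorphisms permute $\lbrace C_1,C_2\rbrace$ by Lemma \ref{le1}) to separate layers $i$ and $j$ with $j\neq i, n-i$, and then simply asserts that $V^0_i$ cannot be mixed with $V^1_i$ or $V^0_{n-i}$. You instead decompose every automorphism as $\delta$ or $\overline{\alpha}\circ\delta$ with $\delta$ preserving each $V^k_i$, which gives the containment of the orbit of a type-$k$, layer-$i$ vertex in $V^k_i\cup V^{1-k}_{n-i}$ in one stroke and, as a byproduct, actually justifies the type-separation claim that the paper leaves unargued (for interior layers all vertices have degree $3$, so a degree argument alone does not settle it). The price you pay is the two ``unwinding'' claims --- that a layer-$0$-preserving automorphism preserves every $V^k_i$ and that $\overline{\alpha}$ sends $V^k_i$ to $V^{1-k}_{n-i}$ --- which you correctly identify as the real content; they do follow by the induction you describe (the base case using that the restriction to $C_1$ must preserve degrees, hence types), and the paper relies on exactly the same unstated induction when it says the existence part ``follows from the proof of Lemma \ref{le2}''. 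On balance your version is somewhat more self-contained, while the paper's distance-invariant argument is shorter where it applies.
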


\begin{proof}
It follows from the proof of Lemma \ref{le2} that for any vertex $x$ of type $k$ in layer $i$, where $i \in \lbrace 0, \ldots, n \rbrace$, $k \in \lbrace 0, 1 \rbrace$, and any vertex $y$ in layer $i$ of type $k$ or in layer $n-i$ of type $1-k$, there is an automorphism that maps $x$ to $y$. Also, if $x$ is in layer $i$ and $y$ is in layer $j$, $j \neq i, j \neq n-i$, the distance from $x$ to $C_1$ or $C_2$, i.e. $\min \lbrace d(x,C_1),d(x,C_2) \rbrace$, can not be the same as the distance from $y$ to $C_1$ or $C_2$, i.e. $\min \lbrace d(y,C_1),d(y,C_2) \rbrace$. Therefore, there is no automorphism that maps $x$ to $y$. Moreover, the vertices in $V^0_i$ can not be mapped with vertices in the set $V^1_i$ or with the vertices in $V^0_{n-i}$, except when $n$ is even and $i = \frac{n}{2}$. Therefore, the theorem follows. \qed
\end{proof}

\subsection{The Graovac-Pisanski index of zig-zag tubulenes}

In this subsection we calculate the Graovac-Pisanski index of zig-zag tubulenes. We have to consider the following four cases. The first part is explained in details, while for the remaining cases only the important results are given. We always denote by $u$ an arbitrary element of $V^0_0$ and by $v$ an arbitrary element of $V^1_0$.

\begin{enumerate}
\item $h$ is odd and $n$ is odd \\
It is enough to compute $W(O^0_0)$ and $W(O^1_0)$, since, for example, $W(O^0_1)$ of the graph $ZT(n,h)$ is exactly $W(O^0_0)$ of the graph $ZT(n-2,h)$ (the graph $ZT(n-2,h)$ is a convex subgraph of the graph $ZT(n,h)$). It is easy to see that $d(u,V^0_0) = d(v,V^1_0) = \frac{1}{2}(h^2-1)$. To compute $W(O^0_0)$ we consider two cases.
\smallskip

\begin{enumerate}
\item $h > n+2$ \\
In this case, we can draw two lines $a$ and $b$ as shown in Figure \ref{razdalje1}. 

\begin{figure}[!htb]
	\centering
		\includegraphics[scale=0.65, trim=0cm 0cm 1cm 0cm]{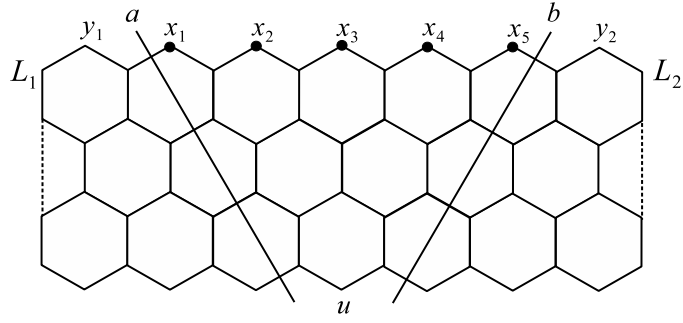}
\caption{Distances from $u$ in $ZT(3,7)$. Lines $L_1$ and $L_2$ are joined.}
	\label{razdalje1}
\end{figure}

There are $n+2$ vertices of $V^1_n$ between $a$ and $b$ (vertices $x_1, \ldots, x_5$ in Figure \ref{razdalje1}) and those vertices has distance $2n+1$ from $u$. A shortest path from $u$ to any other vertex $y$ of $V^1_n$ can be obtained by joining a shortest path from $u$ to a vertex $x \in V^1_n$ (which is between $a$ and $b$ and it is the closest one to $a$ or to $b$) and a shortest path from $x$ to $y$. For example, in Figure \ref{razdalje1}, a shortest path from $u$ to $y_2$ is composed of a shortest path form $u$ to $x_5$ and a shortest path form $x_5$ to $y_2$.
Therefore, we obtain
\begin{eqnarray*} 
d(u,V^1_n) & = & (n+2)(2n+1) + 2 \sum_{i=1}^{\frac{h-n-2}{2}}\Big (2n+1 + 2i \Big) \\
&= & \frac{1}{2}(h^2 + 2hn + n^2 + 2n).
\end{eqnarray*}
Obviously, we get
\begin{eqnarray*}
d(u,O^0_0) & = & d(u, V^0_0)  + d(u,V^1_n) \\
&= & \frac{1}{2}(2h^2 + 2hn + n^2 + 2n - 1).
\end{eqnarray*}

Finally, since any vertex in $O^0_0$ has equivalent position, one can easily deduce
\begin{eqnarray*}
W(O^0_0) & = & \frac{1}{2} \cdot |O^0_0| \cdot d(u,O^0_0) \\
 & = & \frac{h}{2}(2h^2 + 2hn + n^2 + 2n - 1).
\end{eqnarray*}


\item $h \leq n+2$ \\
In this case, all $h$ vertices of $V^1_n$ are at distance $2n+1$ from $u$ (see Figure \ref{razdalje1}). Therefore,

$$d(u,V^1_n) = h(2n+1).$$
We also obtain
$$d(u,O^0_0) =  d(u, V^0_0)  + d(u,V^1_n)=\frac{1}{2}(h^2 + 4hn + 2h - 1)$$
and
$$W(O^0_0) = \frac{h}{2}(h^2 + 4hn + 2h - 1).$$
\end{enumerate}
\bigskip

\noindent To compute $W(O^1_0)$ we also consider two cases.
\smallskip

\begin{enumerate}
\item $h > n$ \\
Similar as before, we can draw two lines $a$ and $b$ as shown in Figure \ref{razdalje2}.

\begin{figure}[h]
	\centering
		\includegraphics[scale=0.65, trim=0cm 0cm 1cm 0cm]{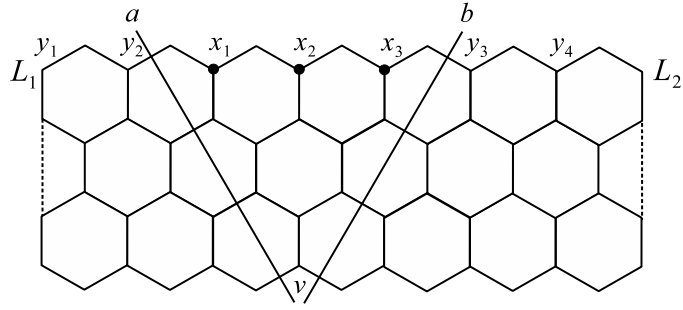}
\caption{Distances from $v$ in $ZT(3,7)$. Lines $L_1$ and $L_2$ are joined.}
	\label{razdalje2}
\end{figure}

There are $n$ vertices of $V^0_n$ between $a$ and $b$ (vertices $x_1, x_2, x_3$ in Figure \ref{razdalje2}) and those vertices has distance $2n-1$ from $v$. A shortest path from $v$ to any other vertex $y$ of $V^0_n$ can be obtained by joining a shortest path from $v$ to a vertex $x \in V^0_n$ (which is between $a$ and $b$ and it is the closest one to $a$ or to $b$) and a shortest path from $x$ to $y$. For example, in Figure \ref{razdalje2}, a shortest path from $v$ to $y_3$ is composed of a shortest path form $v$ to $x_3$ and a shortest path form $x_3$ to $y_3$.
Therefore, we obtain
\begin{eqnarray*} 
d(v,V^0_n) & = & n(2n-1) + 2 \sum_{i=1}^{\frac{h-n}{2}}\Big (2n-1 + 2i \Big) \\
& = & \frac{1}{2}(h^2 + 2hn + n^2 - 2n).
\end{eqnarray*}
Obviously, we get
\begin{eqnarray*}
d(v,O^1_0) & = & d(v, V^1_0)  + d(u,V^0_n) \\
 & = & \frac{1}{2}(2h^2 + 2hn + n^2 - 2n - 1).
\end{eqnarray*}

Finally, since any vertex in $O^1_0$ has equivalent position, one can easily deduce
\begin{eqnarray*}
W(O^1_0) & = & \frac{1}{2} \cdot |O^1_0| \cdot d(v,O^1_0) \\
 & = & \frac{h}{2}(2h^2 + 2hn + n^2 - 2n - 1).
\end{eqnarray*}

\item $h \leq n$ \\
In this case, all $h$ vertices of $V^0_n$ are at distance $2n-1$ from $v$ (see Figure \ref{razdalje2}). Therefore,

$$d(v,V^0_n) = h(2n-1).$$
We also obtain
$$d(v,O^1_0) =  d(v, V^1_0)  + d(v,V^0_n)=\frac{1}{2}(h^2 + 4hn - 2h - 1)$$
and
$$W(O^1_0) = \frac{h}{2}(h^2 + 4hn - 2h - 1).$$
\end{enumerate}

\noindent Putting all the results together, we obtain Table \ref{tabela1}.
%
%

\begin{center}
\begin{table}[H] \renewcommand{\arraystretch}{1.5}
\begin{tabular}{|c||c|c|}

\hline 
& $h > n+2$ & $h \leq n+2$ \\
\hline \hline
$d(u,V^0_0)$ & $\frac{1}{2}(h^2-1)$ & $\frac{1}{2}(h^2-1)$  \\ 
 \hline 
$d(u,V^1_n)$ & $\frac{1}{2}(h^2 + 2hn + n^2 + 2n)$ & $h(2n+1)$  \\ 
\hline 
$d(u,O^0_0)$ & $\frac{1}{2}(2h^2 + 2hn + n^2 + 2n - 1)$ & $\frac{1}{2}(h^2 + 4hn + 2h - 1)$ \\ 
\hline 
$W(O^0_0)$ & $\frac{h}{2}(2h^2 + 2hn + n^2 + 2n - 1)$ & $\frac{h}{2}(h^2 + 4hn + 2h - 1)$  \\ 
\hline \hline
& $h > n$ & $h \leq n$ \\
\hline \hline
 $d(v,V^1_0)$ & $\frac{1}{2}(h^2-1)$  & $\frac{1}{2}(h^2-1)$  \\ 
\hline 
$d(v,V^0_n)$ & $\frac{1}{2}(h^2 + 2hn + n^2 - 2n)$   & $h(2n-1)$  \\ 
\hline 
$d(v,O^1_0)$ & $\frac{1}{2}(2h^2 + 2hn + n^2 - 2n - 1)$  & $\frac{1}{2}(h^2 + 4hn - 2h - 1)$ \\ 
\hline 
$W(O^1_0)$ & $\frac{h}{2}(2h^2 + 2hn + n^2 - 2n - 1)$ & $\frac{h}{2}(h^2 + 4hn - 2h - 1)$  \\
\hline 
\end{tabular} 
\caption{\label{tabela1} Distances in $ZT(n,h)$ with $h$ odd and $n$ odd.}
\end{table}
\end{center}

To compute $\widehat{W}(ZT(n,h))$, we use Formula \ref{formula}. First define the following functions from Table \ref{tabela1}.
$$\begin{array}{rcl}
f_1(n) & = & \frac{h}{2}(2h^2 + 2hn + n^2 + 2n - 1) \\
f_2(n) & = &  \frac{h}{2}(h^2 + 4hn + 2h - 1) \\
g_1(n) & = & \frac{h}{2}(2h^2 + 2hn + n^2 - 2n - 1) \\
g_2(n) & = & \frac{h}{2}(h^2 + 4hn - 2h - 1).
\end{array}$$

\noindent As already mentioned, we notice that $W(O^0_i)=f_1(n-2i)$ if $n-2i < h-2$ and $W(O^0_i)=f_2(n-2i)$ if $n-2i \geq h-2$ (and similar can be done for $W(O^1_i)$). Now consider three cases. 
\begin{itemize}
\item [(a)] $n < h-2$ \\
\noindent We obtain
$$ {W'}(ZT(n,h))  =  \sum_{i=1}^{\frac{n+1}{2}}f_1(2i-1) + \sum_{i=1}^{\frac{n+1}{2}}g_1(2i-1).$$

\item [(b)] $n = h-2$ \\
\noindent We obtain
$$ {W'}(ZT(n,h))  =  \sum_{i=1}^{\frac{n-1}{2}}f_1(2i-1) + f_2(n) + \sum_{i=1}^{\frac{n+1}{2}}g_1(2i-1).$$

\item [(c)] $n \geq h$ \\
\noindent We obtain
\begin{eqnarray*} 
{W'}(ZT(n,h))  & = & \sum_{i=1}^{\frac{h-3}{2}}f_1(2i-1) + \sum_{i=\frac{h-1}{2}}^{\frac{n+1}{2}}f_2(2i-1) \\
& + & \sum_{i=1}^{\frac{h-1}{2}}g_1(2i-1) + \sum_{i=\frac{h+1}{2}}^{\frac{n+1}{2}}g_2(2i-1).
\end{eqnarray*}

\end{itemize}
To compute all the sums from the previous cases, we use a computer program. Since $|V(ZT(n,h))| = 2h(n+1)$ and the cardinality of any orbit of $ZT(n,h)$ is $2h$, it is easy to see that $\widehat{W}(ZT(n,h)) = (n+1)W'(ZT(n,h))$. The results are presented in the first part of Table \ref{tabela5}.
\bigskip

\item $h$ is even and $n$ is odd

\noindent Important results for this case are shown in Table \ref{tabela2}. Using these results, the closed formulas for the Graovac-Pisanski index are presented in the second part of Table \ref{tabela5}. The details are similar to Case 1.

\begin{center}
\begin{table}[H] \renewcommand{\arraystretch}{1.5}
\begin{tabular}{|c||c|c|}

\hline 
& $h > n+2$ & $h \leq n+2$ \\
\hline \hline
$d(u,V^0_0)$ & $\frac{h^2}{2}$ & $\frac{h^2}{2}$  \\ 
 \hline 
$d(u,V^1_n)$ & $\frac{1}{2}(h^2 + 2hn + n^2 + 2n + 1)$ & $h(2n+1)$  \\ 
\hline 
$d(u,O^0_0)$ & $\frac{1}{2}(2h^2 + 2hn + n^2 + 2n +1)$ & $\frac{1}{2}(h^2 + 4hn + 2h)$ \\ 
\hline 
$W(O^0_0)$ & $\frac{h}{2}(2h^2 + 2hn + n^2 + 2n +1)$ & $\frac{h}{2}(h^2 + 4hn + 2h)$   \\ 
\hline \hline
& $h > n$ & $h \leq n$ \\
\hline \hline
 $d(v,V^1_0)$ & $\frac{h^2}{2}$  & $\frac{h^2}{2}$  \\ 
\hline 
$d(v,V^0_n)$ & $\frac{1}{2}(h^2 + 2hn + n^2 - 2n +1)$   & $h(2n-1)$  \\ 
\hline 
$d(v,O^1_0)$ & $\frac{1}{2}(2h^2 + 2hn + n^2 - 2n + 1)$  & $\frac{1}{2}(h^2 + 4hn - 2h)$ \\ 
\hline 
$W(O^1_0)$ & $\frac{h}{2}(2h^2 + 2hn + n^2 - 2n + 1)$ & $\frac{h}{2}(h^2 + 4hn - 2h)$  \\
\hline 
\end{tabular} 
\caption{\label{tabela2} Distances in $ZT(n,h)$ with $h$ even and $n$ odd.}
\end{table}
\end{center}

\item $h$ is odd and $n$ is even

\noindent Since the orbit $O_{\frac{n}{2}}$ is a cycle of length $2h$, it follows that $W(O_{\frac{n}{2}}) = h^3$. The other important results for this case are shown in Table \ref{tabela3}. Using these results, the closed formulas for the Graovac-Pisanski index are presented in the third part of Table \ref{tabela5}. The details are similar to Case 1.

\begin{center}
\begin{table}[H] \renewcommand{\arraystretch}{1.5}
\begin{tabular}{|c||c|c|}

\hline 
& $h > n+2$ & $h \leq n+2$ \\
\hline \hline
$d(u,V^0_0)$ & $\frac{1}{2}(h^2-1)$ & $\frac{1}{2}(h^2-1)$  \\ 
 \hline 
$d(u,V^1_n)$ & $\frac{1}{2}(h^2 + 2hn + n^2 + 2n + 1)$ & $h(2n+1)$  \\ 
\hline 
$d(u,O^0_0)$ & $\frac{1}{2}(2h^2 + 2hn + n^2 + 2n)$ & $\frac{1}{2}(h^2 + 4hn + 2h -1)$ \\ 
\hline 
$W(O^0_0)$ & $\frac{h}{2}(2h^2 + 2hn + n^2 + 2n)$ & $\frac{h}{2}(h^2 + 4hn + 2h -1)$   \\ 
\hline \hline
& $h > n$ & $h \leq n$ \\
\hline \hline
 $d(v,V^1_0)$ & $\frac{1}{2}(h^2-1)$  & $\frac{1}{2}(h^2-1)$  \\ 
\hline 
$d(v,V^0_n)$ & $\frac{1}{2}(h^2 + 2hn + n^2 - 2n +1)$   & $h(2n-1)$  \\ 
\hline 
$d(v,O^1_0)$ & $\frac{1}{2}(2h^2 + 2hn + n^2 - 2n)$  & $\frac{1}{2}(h^2 + 4hn - 2h - 1)$ \\ 
\hline 
$W(O^1_0)$ & $\frac{h}{2}(2h^2 + 2hn + n^2 - 2n)$ & $\frac{h}{2}(h^2 + 4hn - 2h -1)$  \\
\hline 
\end{tabular} 
\caption{\label{tabela3} Distances in $ZT(n,h)$ with $h$ odd and $n$ even.}
\end{table}
\end{center}

\item $h$ is even and $n$ is even \\
Since the orbit $O_{\frac{n}{2}}$ is a cycle of length $2h$, it follows that $W(O_{\frac{n}{2}}) = h^3$. The other important results for this case are shown in Table \ref{tabela4}. Using these results, the closed formulas for the Graovac-Pisanski index are presented in the last part of Table \ref{tabela5}. The details are similar to Case 1.

\begin{center}
\begin{table}[H] \renewcommand{\arraystretch}{1.5}
\begin{tabular}{|c||c|c|}

\hline 
& $h > n+2$ & $h \leq n+2$ \\
\hline \hline
$d(u,V^0_0)$ & $\frac{h^2}{2}$ & $\frac{h^2}{2}$  \\ 
 \hline 
$d(u,V^1_n)$ & $\frac{1}{2}(h^2 + 2hn + n^2 + 2n)$ & $h(2n+1)$  \\ 
\hline 
$d(u,O^0_0)$ & $\frac{1}{2}(2h^2 + 2hn + n^2 + 2n)$ & $\frac{1}{2}(h^2 + 4hn + 2h)$ \\ 
\hline 
$W(O^0_0)$ & $\frac{h}{2}(2h^2 + 2hn + n^2 + 2n)$ & $\frac{h}{2}(h^2 + 4hn + 2h)$   \\ 
\hline \hline
& $h > n$ & $h \leq n$ \\
\hline \hline
 $d(v,V^1_0)$ & $\frac{h^2}{2}$  & $\frac{h^2}{2}$  \\ 
\hline 
$d(v,V^0_n)$ & $\frac{1}{2}(h^2 + 2hn + n^2 - 2n)$   & $h(2n-1)$  \\ 
\hline 
$d(v,O^1_0)$ & $\frac{1}{2}(2h^2 + 2hn + n^2 - 2n)$  & $\frac{1}{2}(h^2 + 4hn - 2h)$ \\ 
\hline 
$W(O^1_0)$ & $\frac{h}{2}(2h^2 + 2hn + n^2 - 2n)$ & $\frac{h}{2}(h^2 + 4hn - 2h)$  \\
\hline 
\end{tabular} 
\caption{\label{tabela4} Distances in $ZT(n,h)$ with $h$ even and $n$ even.}
\end{table}
\end{center}

\end{enumerate}

Finally, the closed formulas for the Graovac-Pisanski index $\widehat{W}(ZT(n,h))$ are shown in Table \ref{tabela5}. The results for some small cases are omitted.

\begin{center}
\begin{table}[H] \renewcommand{\arraystretch}{1.5}
\begin{tabular}{|c||c|}

\hline 
$h$ odd, $n$ odd & \\
\hline 
$n < h-2$ & $\frac{h}{6}(n + 1)^2(6h^2 + 3hn + 3h + n^2 + 2n - 3)$   \\ 
 \hline 
$n=h-2, n \geq 3$ & $\frac{h}{6}(n+1)(6h^2n + 3h^2 + 3hn^2 + 12hn + 9h + n^3 - 7n - 3)$  \\ 
\hline 
$n \geq h, h \geq 5$ & $\frac{h}{6}(n+1)(h^3 + 3h^2n + 3h^2 + 6hn^2 + 12hn + 5h - 3n - 3)$ \\ 
\hline \hline
$h$ even, $n$ odd & \\
\hline 
$n < h-2$ & $\frac{h}{6}(n + 1)^2(6h^2 + 3hn + 3h + n^2 + 2n + 3)$   \\ 
 \hline 
$n=h-1, n \geq 3$ & $\frac{h}{6}(n+1)(6h^2n + 3h^2 + 3hn^2 + 12hn + 9h + n^3 - n)$  \\ 
\hline 
$n \geq h, h \geq 4$ & $\frac{h^2}{6}(n+1)(h^2 + 3hn + 3h + 6n^2 + 12n + 8)$ \\ 
\hline \hline
$h$ odd, $n$ even & \\
\hline 
$n < h-2$ & $\frac{h}{6}(n+1)(6h^2n + 6h^2 + 3hn^2 + 6hn + n^3 + 3n^2 + 2n)$   \\ 
 \hline 
$n=h-1, n \geq 4$ & $\frac{h}{6}(n+1)(6h^2n + 3h^2 + 3hn^2 + 12hn + 6h + n^3 - 4n - 3)$   \\ 
\hline 
$n \geq h, h \geq 5$ & $\frac{h}{6}(n+1)(h^3 + 3h^2n + 3h^2 + 6hn^2 + 12hn + 5h - 3n - 3)$ \\ 
\hline \hline
$h$ even, $n$ even & \\
\hline 
$n < h-2$ & $\frac{h}{6}(n+1)(6h^2n + 6h^2 + 3hn^2 + 6hn + n^3 + 3n^2 + 2n)$   \\ 
 \hline 
$n=h-2, n \geq 4$ & $\frac{h}{6}(n+1)(6h^2n + 3h^2 + 3hn^2 + 12hn + 6h + n^3 - 4n)$ \\ 
\hline 
$n \geq h, h \geq 6$ & $\frac{h^2}{6}(n+1)(h^2 + 3hn + 3h + 6n^2 + 12n + 2)$ \\ 
\hline

\end{tabular} 
\caption{\label{tabela5} Closed formulas for $\widehat{W}(ZT(n,h))$.}
\end{table}
\end{center}

\section*{Acknowledgment} 

\noindent The author Niko Tratnik was financially supported by the Slovenian Research Agency.\\
The final publication is available at Springer via http://dx.doi.org/10.1007/s10910-017-0749-5



\end{document}